\newtheorem{theorem}{Theorem}[section]
\newtheorem{lemma}[theorem]{Lemma}
\newtheorem{corollary}[theorem]{Corollary}
\newtheorem{proposition}[theorem]{Proposition}
\newtheorem{conjecture}[theorem]{Conjecture}
\theoremstyle{definition}
\newtheorem{example}[theorem]{Example}
\theoremstyle{remark}
\newtheorem{remark}[theorem]{Remark}
\numberwithin{equation}{section}
\DeclareMathOperator{\Red}{Red}
\DeclareMathOperator{\uRed}{uRed}
\begin{document}

\title[Red sizes of quivers]{Red sizes of quivers}


\author{Eric Bucher}
\address{Department of Mathematics, Xavier University, Cincinnati, OH 45207 }
\email{buchere1@xavier.edu}

\author{John Machacek}
\address{Department of Mathematics, University of Oregon, Euegene, OR 97403}
\email{johnmach@uoregon.edu}
\thanks{J.M. with support from NSF Grant DMS-2039316.}

\subjclass[2010]{Primary 13F60}



\begin{abstract}
In this article, we will expand on the notions of maximal green and reddening sequences for quivers associated to cluster algebras. The existence of these sequences has been studied for a variety of applications related to Fomin and Zelevinsky's cluster algebras. Ahmad and Li considered a numerical measure of how close a quiver is to admitting a maximal green sequence called a \emph{red number}.
In this paper we generalized this notion to what we call \emph{unrestricted red numbers} which are related to reddening sequences. In addition to establishing this more general framework we completely determine the red numbers and unrestricted red numbers for all finite mutation type quivers.
Furthermore, we give conjectures on the possible values of red numbers and unrestricted red numbers in general.
\end{abstract}

\maketitle

\section{Introduction}

Maximal green sequences were defined by Keller~\cite{Dilog} and play a role in the theory of Fomin and Zelevinsky’s cluster algebras~\cite{CA1}.
They also arise in representation theory, wall-crossing, and the computation of Donaldson-Thomas invariants. 
A survey of many results, properties, and the history of maximal green sequences can be found in~\cite{survey}. While the primary focus of the study of maximal green sequences has been to determine when such a sequence exists, a natural question arises; if a quiver does not admit a maximal green sequence then how close is it? 

Recently, Ahmad and Li have considered a numerical measure of how close a quiver is to possessing a maximal green sequence~\cite{genMGS}. This statistic is called the \emph{red number} of the quiver and measures how many of the vertices in the quiver can be turned red via a sequence of green mutations. We further study this measure, as well as put forth an analogous measure related to the more general notion of reddening sequences (which are also referred to as green-to-red sequences). This more general concept of \emph{unrestricted red numbers} measures how many of the vertices of a quiver can be turned red via any sequence of mutations. 

In this paper we will present the background definitions for quiver mutations in Section~\ref{sec:mu}. In Section~\ref{sec:redsize}, we will establish the key concepts of this article: red numbers and unrestricted red numbers of quivers. We will present some foundational results regarding these new concepts as well as present a conjecture about the nature of (unrestricted) red numbers for arbitrary quivers. In Section~\ref{sec:finite}, we will explicitly find the red numbers and unrestricted red numbers for quivers of finite mutation type, verifying the conjecture for finite mutation type quivers. These quivers play a vital role in the classification of cluster algebras, and the results will give definitive answers for this important family of quivers. We conclude in Section~\ref{sec:conclusion} with providing some analysis of red number on quivers with few vertices and discussion of possible approaches to our conjectures using scattering diagrams and universal quivers.

\section{Quiver mutation}\label{sec:mu}
A \emph{quiver} is a pair $Q = (V,E)$ where $V$ is a set of \emph{vertices} and $E$ is mulitset \emph{arrows} between two distinct vertices.
That is, if $i \to j$ for $i,j \in V$ is an arrow then we require that $i \neq j$ (i.e. no loops).
Furthermore, we do not allow both $i \to j \in E$ and $j \to i \in E$ (i.e. no $2$-cycles).
Let us point out that $E$ is a multiset meaning multiple arrows between to particular vertices is permitted.
Given a quiver $Q = (V,E)$ the corresponding \emph{framed} quiver is $\hat{Q} = (V \sqcup F, E \sqcup E')$ where $F = \{i' : i \in V\}$ and $E' = \{i \to i' : i \in V\}$.
In other words the framed quiver obtained to adding a new vertex $i'$ for each $i \in V$ and then adding an arrow $i \to i'$.
We retain the decomposition of the vertex set $V \sqcup F$.
All elements of $V \sqcup F$ are vertices, but vertices in $V$ are called \emph{mutable} while vertices in $F$ are call \emph{frozen}.
The notion of a framed quiver will be essential the to defining the red and green mutation which allows us to consider maximal green and reddening sequences.

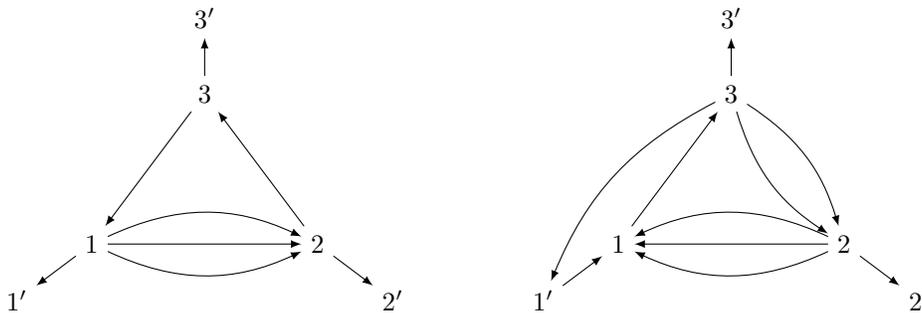
\begin{figure}
    \centering
\begin{tikzpicture}
    \node (1) at (-1.5,0) {$1$};
    \node (2) at (1.5,0) {$2$};
    \node (3) at (0,2) {$3$};
    \node (11) at (-2.5,-0.75) {$1'$};
    \node (22) at (2.5,-0.75) {$2'$};
    \node (33) at (0,3) {$3'$};
    \draw[-{latex}] (1) to (11); 
    \draw[-{latex}] (2) to (22); 
    \draw[-{latex}] (3) to (33);
    \draw[-{latex}, bend right=25] (1) to (2);
    \draw[-{latex}] (1) to (2);
    \draw[-{latex}, bend left=25] (1) to (2);
    \draw[-{latex}] (2) to (3);
    \draw[-{latex}] (3) to (1);
    
    \begin{scope}[shift={(7,0)}]
    \node (1) at (-1.5,0) {$1$};
    \node (2) at (1.5,0) {$2$};
    \node (3) at (0,2) {$3$};
    \node (11) at (-2.5,-0.75) {$1'$};
    \node (22) at (2.5,-0.75) {$2'$};
    \node (33) at (0,3) {$3'$};
    \draw[-{latex}, bend right=20] (3) to (11);
    \draw[-{latex}] (11) to (1); 
    \draw[-{latex}] (2) to (22); 
    \draw[-{latex}] (3) to (33);
    \draw[-{latex}, bend right=25] (2) to (1);
    \draw[-{latex}] (2) to (1);
    \draw[-{latex}, bend left=25] (2) to (1);
    \draw[-{latex}, bend right=20] (3) to (2);
    \draw[-{latex},  bend left=20] (3) to (2);

    \draw[-{latex}] (1) to (3);
    
    \end{scope}

\end{tikzpicture}
    \caption{A framed quiver $\hat{Q}$ on the left and $\mu_1(\hat{Q})$ the result of a mutation on the right.}
    \label{fig:mutation}
\end{figure}

We are now ready to define the process of quiver mutation which is a combinatorial ingredient in Fomin and Zelevinsky’s cluster algebras~\cite{CA1}. 
The \emph{mutation} of a quiver $Q$ (framed quiver $\hat{Q}$) at a mutable vertex $k$ is denoted $\mu_k(Q)$ ($\mu_k(\hat{Q})$) and is obtained by
\begin{enumerate}
    \item adding an arrow $i \to j$ for any $2$-path of vertices $i \to k \to j$,
    \item reverse all arrows incident to $k$,
    \item delete a maximal collection of $2$-cycles as well as any arrow between frozen vertices.
\end{enumerate}
An example of a quiver and mutation in shown in Figure~\ref{fig:mutation}.

Let $i$ be a mutable vertex in $\mu_{j_{\ell}} \circ \cdots \circ \mu_{j_2} \circ \mu_{j_1}(\hat{Q})$ for any sequence $j_1, j_2, \dots, j_{\ell}$ of mutable vertices.
The vertex $i$ is called \emph{green} if there exists a frozen vertex $j'$ and arrow $i \to j'$.
The vertex $i$ is called \emph{red} if there exists a frozen vertex $j'$ and arrow $j' \to i$.
By a property known as \emph{sign coherence} a vertex is always green or red and never both.
This is an important property in cluster algebra theory that was conjectured by Fomin and Zelevinsky~\cite{CA4}.
It was first proven to hold by Derksen, Weyman, and Zelevinsky~\cite{DWZ} (see~\cite{Nagao} for another proof and~\cite{GHKK} for a proof in a case of greater generality).

A sequence of vertices $(i_1, i_2, \dots, i_{\ell})$ of a quiver $Q$ is called a \emph{reddening sequence} if all vertices
of $\mu_{\ell} \circ \cdots \circ \mu_{i_2} \circ \mu_{i_1} ( \hat{Q})$ are red.
If in addition for all $1 \leq j \leq \ell$ the vertex $i_j$ is green in $\mu_{j-1} \circ \cdots \circ \mu_{i_2} \circ \mu_{i_1} ( \hat{Q})$, then the sequence $(i_1, i_2, \dots, i_{\ell})$ is called a \emph{maximal green sequence}.
The definition of a maximal green sequence is due to Keller~\cite{Dilog}.
These sequences of mutations are purely combinatorial with the definition above, but they have numerous applications in cluster algebra theory and representation theory~\cite{survey}.
A sequence of vertices $(i_1, i_2, \dots, i_{\ell})$ so that $i_j$ is green in $\mu_{j-1} \circ \cdots \circ \mu_{i_2} \circ \mu_{i_1} ( \hat{Q})$ for all $1 \leq j \leq \ell$ is called a \emph{green sequence}.
With a green sequence there is no condition on the final color of the vertices after apply the mutations.

A quiver $Q$ is said to be of \emph{finite mutation type} if the set of all quivers that can be obtain from $Q$ by iteratively applying mutation is finite.
A classification of quivers of finite mutation type was completed by Felikson, Shapiro, and Tumarkin~\cite{FSTu}.
The main source of quivers of finite mutation type is adjacency quivers of ideal triangulations of bordered marked surfaces~\cite{FSTh}.
A triangulation of the once punctured torus and its adjacency quiver that is known as the \emph{Markov quiver} is shown in Figure~\ref{fig:Markov}.
The remaining quivers of finite mutation type either have two vertices or come from 11 exceptional types.
The exceptional quiver of finite mutation type that will be of the most interest to us is $X_7$ which is shown in Figure~\ref{fig:X7} and originally found by Derksen and Owen~\cite{DO}.

\begin{figure}
    \centering
    \begin{tikzpicture}
    
    \draw[red, ultra thick] (-1.5,1.5) -- (1.5,1.5);
    \draw[red, ultra thick] (-1.5,-1.5) -- (1.5,-1.5);
    \draw[blue, ultra thick] (-1.5,1.5) -- (-1.5,-1.5);
    \draw[blue, ultra thick] (1.5,1.5) -- (1.5,-1.5);
    \node[draw,circle,fill=gray,scale=0.5] (a) at (0,0) {};
    \draw (a) -- (-1.5,0);
    \draw (1.5,0) -- (a);
    \draw[purple] (a) -- (1.5,1.5);
    \draw[purple] (-1.5,-1.5) -- (a);
    \draw[black!60!green] (a) -- (0,1.5);
    \draw[black!60!green] (0,-1.5) -- (a);
    \node at (0.75,-0.2) {$1$};
    \node at (-0.2,0.75) {$2$};
    \node at (-0.69, -0.89) {$3$};
    
    \node (1) at (5,1) {$1$};
    \node (2) at (3.5,-1) {$2$};
    \node (3) at (6.5,-1) {$3$};
    \draw[-{latex}, bend left=10] (1) to (2);
    \draw[-{latex}, bend right=10] (1) to (2);
    \draw[-{latex}, bend left=10] (2) to (3);
    \draw[-{latex}, bend right=10] (2) to (3);
    \draw[-{latex}, bend left=10] (3) to (1);
    \draw[-{latex}, bend right=10] (3) to (1);
 \end{tikzpicture}
    
    \caption{An ideal triangulation of the once punctured torus on the left and its adjacency quiver on the right.}
    \label{fig:Markov}
\end{figure}
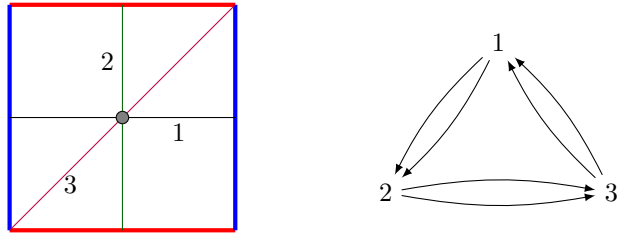

\section{Red sizes of quivers}\label{sec:redsize}

\subsection{Red sizes} We now give two definitions originally proposed by Ahmad and Li in~\cite{genMGS}.
A \emph{general maximal green sequence} for a quiver $Q$ is a green sequence $(i_1, i_2, \dots, i_{\ell})$ of vertices of $Q$ so that $\mu_{i_{\ell}} \cdots \mu_{i_2} \mu_{i_1}(\widehat{Q})$ has that maximal possible number of red vertices among any possible applying green sequence of mutations.
The \emph{red size} of a quiver $Q$ is the number of red vertices in $\mu_{i_{\ell}} \circ \cdots \circ \mu_{i_2} \circ \mu_{i_1}(\widehat{Q})$ for any general maximal green sequence $(i_1, i_2, \dots, i_{\ell})$.
We let $\Red(Q)$ denote the red size of $Q$.

Let us make the analogous definitions for reddening sequences.
A \emph{general reddening sequence} for a quiver $Q$ is a sequence $(i_1, i_2, \dots, i_{\ell})$ of vertices of $Q$ so that $\mu_{i_{\ell}} \cdots \mu_{i_2} \mu_{i_1}(\widehat{Q})$ has that maximal possible number of red vertices obtainable by applying mutations to $Q$.
The \emph{unrestricted red size} of a quiver $Q$ is the number of red vertices in $\mu_{i_{\ell}} \circ \cdots \circ \mu_{i_2} \circ \mu_{i_1}(\widehat{Q})$ where  $(i_1, i_2, \dots, i_{\ell})$ is a general reddening sequence.
We let $\uRed(Q)$ denote the unrestricted red size of $Q$.

We briefly discuss some basic facts which follow immediately from these definitions.
First, a quiver $Q$ admits a maximal green sequence if and only if $\Red(Q) = |V(Q)|$. 
Similarly, a quiver $Q$ admits a reddening sequence if and only if $\uRed(Q) = |V(Q)|$.  
Also, for any $Q$ we have that $\Red(Q) \leq \uRed(Q)$.
Equality $\Red(Q) = \uRed(Q)$ can hold (e.g. any quiver which admits a maximal green sequence, take for instance an acyclic quiver~\cite[Lemma 2.20]{BDP}).
Also, strict inequality is possible (e.g. any quiver which admits a reddening sequence but not a maximal green sequence, take for instance Muller's example~\cite[Figure 1]{MullerEJC}).

\subsection{Triangular extension} For any two quivers $Q_1$ and $Q_2$ a \emph{triangular extension} of $Q_1$ by $Q_2$ is any quiver $Q$ with
\[V(Q) = V(Q_1) \sqcup V(Q_2)\]
\[E(Q) = E(Q_1) \sqcup E(Q_2) \sqcup E\]
where $E$ is any set of arrows such that $i \to j \in E$ implies $i \in Q_1$ and $j \in Q_2$.
The triangular extension plays on important role in the existence and construction of both maximal green and reddening sequences~\cite{BanffP, CP, Uniform, typeA}.
These techniques can be adapted to construct general maximal green sequences or at least bound the red number.
However, the case of general maximal green sequences lacks a desirable property compared with the case where a maximal green sequence exists.

Let $Q$ be a triangular extension of $Q_1$ by $Q_2$.
It is known that in this case $\Red(Q) \geq \Red(Q_1) + \Red(Q_2)$~\cite[Theorem 4.8]{genMGS}.
Furthermore, it is true that $Q$ admits a maximal green sequence if and only if both $Q_1$ and $Q_2$ admit maximal green sequences~\cite[Theorem 4.10]{Uniform}.
It can be shown in a similar fashion that also $\uRed(Q) \geq \uRed(Q_1) + \uRed(Q_2)$ (cf. \cite[Remark 4.6]{Uniform}).
Ahmad and Li and have asked if $\Red(Q) = \Red(Q_1) + \Red(Q_2)$ when $Q$ is a triangular extension of $Q_1$ by $Q_2$~\cite[Question 4.9]{genMGS}.
We answer this question, as well as the analogous question for $\uRed$, in the negative.
\begin{figure}
    \centering
\begin{tikzpicture}[scale=0.5]
\node (1) at (-1,0) {$1$};
\node (2) at (-4,2) {$2$};
\node (3) at (-4,-2) {$3$};
\node (4) at (1,0) {$4$};
\node (5) at (4,2) {$5$};
\node (6) at (4,-2) {$6$};
\draw[-{latex}, bend left=10] (1) to (2);
\draw[-{latex}, bend right=10] (1) to (2);
\draw[-{latex}, bend left=10] (2) to (3);
\draw[-{latex}, bend right=10] (2) to (3);
\draw[-{latex}, bend left=10] (3) to (1);
\draw[-{latex}, bend right=10] (3) to (1);
\draw[-{latex}] (1) to (4);
\draw[-{latex}, bend left=10] (4) to (5);
\draw[-{latex}, bend right=10] (4) to (5);
\draw[-{latex}, bend left=10] (5) to (6);
\draw[-{latex}, bend right=10] (5) to (6);
\draw[-{latex}, bend left=10] (6) to (4);
\draw[-{latex}, bend right=10] (6) to (4);
\end{tikzpicture}
    \caption{A triangular extension.}
    \label{fig:triExt}
\end{figure}
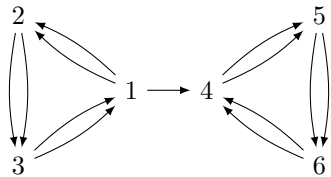

\begin{table}[]
    \centering
\begin{tabular}{|c|c|}\hline
General maximal green sequence & Remaining green vertex\\ \hline
$(3,4,1,2,4,6,5)$ & $1$ \\ \hline
$(1,4,2,3,5,4,6,5,6,2,5)$ & $2$ \\ \hline
$(3,2,1,4,5,3,6,5,6,3,5,3,5,6)$ & $3$ \\ \hline
$(3,1,4,2,6,5)$ & $4$ \\ \hline
$(3,1,4,2,6,5,4)$ & $5$ \\ \hline
$(6,5,4,2,3,1,3,4,1,6,3,4)$ & $6$ \\ \hline
\end{tabular}
    \caption{General maximal green sequences for the quiver in Figure~\ref{fig:triExt}.}
    \label{tab:triExt}
\end{table}

\begin{proposition}
There exists quivers $Q$, $Q_1$, and $Q_2$ such that $Q$ is a triangular extension of $Q_1$ by $Q_2$ and $\Red(Q) > \Red(Q_1) + \Red(Q_2)$.
Similarly, there exists quivers $Q$, $Q_1$, and $Q_2$ such that $Q$ is a triangular extension of $Q_1$ by $Q_2$ and $\uRed(Q) > \uRed(Q_1) + \uRed(Q_2)$.
\label{prop:triangular}
\end{proposition}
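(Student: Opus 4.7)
The plan is to exhibit a single triangular extension that witnesses strict inequality for both $\Red$ and $\uRed$ simultaneously. Let $Q$ be the quiver of Figure~\ref{fig:triExt}: it is a triangular extension of $Q_1$, the Markov quiver on $\{1,2,3\}$, by $Q_2$, the Markov quiver on $\{4,5,6\}$, connected by the single arrow $1 \to 4$.

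The first step is to determine $\Red(Q_i)$ and $\uRed(Q_i)$. Since the Markov quiver admits neither a maximal green sequence nor a reddening sequence, we have $\Red(Q_i) \leq \uRed(Q_i) \leq 2$. A direct computation in the framed Markov quiver -- for instance, applying $\mu_2 \circ \mu_1$ to $\widehat{Q_i}$ -- shows that both of the mutated vertices acquire an incoming arrow from a frozen vertex, so the matching lower bound $\Red(Q_i) \geq 2$ holds. Hence $\Red(Q_1)+\Red(Q_2) = \uRed(Q_1)+\uRed(Q_2) = 4$.

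The second step is the lower bound $\Red(Q) \geq 5$. This follows by verifying any single entry of Table~\ref{tab:triExt}: each row gives a green sequence which, when applied to $\widehat{Q}$, leaves precisely one mutable vertex green, so five of the six mutable vertices are red at the end. The matching upper bound $\Red(Q) \leq 5$ is conceptual: since $Q_1$ does not admit a maximal green sequence, by~\cite[Theorem 4.10]{Uniform} neither does $Q$, so $\Red(Q) < 6$. Combining these gives $\Red(Q) = 5 > 4$, proving the first inequality. The second inequality is then immediate, since $\uRed(Q) \geq \Red(Q) = 5$ while $\uRed(Q_1)+\uRed(Q_2) = 4$.

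The main obstacle is simply mutation bookkeeping: verifying even one row of Table~\ref{tab:triExt} requires tracking a framed quiver on twelve vertices through seven or more mutations, checking at each step that the chosen vertex is still green and finally computing the color of every mutable vertex. This is most easily handled with a computer algebra system; the remainder of the argument is purely conceptual, resting on the well-known obstruction for the Markov quiver and the triangular extension theorem of~\cite{Uniform}.
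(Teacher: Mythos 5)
Your proposal is correct and follows essentially the same route as the paper: the same quiver of Figure~\ref{fig:triExt} viewed as a triangular extension of two Markov quivers, with $\Red(Q_i)=\uRed(Q_i)=2$ and a row of Table~\ref{tab:triExt} supplying the lower bound $\Red(Q)\geq 5$. The only difference is that you spell out the lower bound $\Red(Q_i)\geq 2$ and the upper bound $\Red(Q)\leq 5$ explicitly, which the paper leaves implicit; neither is needed beyond the strict inequality $5>4$.
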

\begin{proof}
We will give one example which establishes the proposition for both $\Red$ and $\uRed$.
Let $Q$ be the quiver in Figure~\ref{fig:triExt}  which is a triangular extension of $Q_1 = Q|_A$ and $Q_2 = Q|_B$ for $A = \{1,2,3\}$ and $B = \{4,5,6\}$.
Here we have that
\[\Red(Q_1) = \uRed(Q_1) = \Red(Q_2) = \uRed(Q_2) = 2\]
since $Q|_A = Q|_B$ is the Markov quiver which is known not the admit a reddening sequence~\cite[Remark 3.4]{Lad}.
However, we find that $\Red(Q) = \uRed(Q) = 5$ since any one of the general maximal green sequences in Table~\ref{tab:triExt} produces $5$ red vertices.
\end{proof}

We have found in Proposition~\ref{prop:triangular} that red sizes do not respect triangular extensions in the same way that maximal green sequences do.
In the case of red sizes we can end up with more red vertices than one might expect.
This observation leads directly into the next subsection where we conjecture it is always possible to find a sequence of mutations resulting in only $1$ remaining green vertex for any connected quiver.

\subsection{Conjectures} 

Let us now make two conjectures.
The first conjecture is in the spirit of Muller's result~\cite[Corollary 3.2.2.]{MullerEJC} and deals with the potential mutation invariance of the unrestricted red number.

\begin{conjecture}
The unrestricted red size is mutation invariant. That is, $\uRed(Q) = \uRed(\mu_k(Q))$ for any $k$.
\label{conj:muinv}
\end{conjecture}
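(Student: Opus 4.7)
My plan is to reformulate the statement in the language of c-vectors of principal-coefficient cluster algebras, following the approach Muller used in \cite{MullerEJC} for the corresponding fact about reddening sequences (which handles the special case $\uRed(Q) = |V(Q)|$).

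First I would observe that a mutable vertex of any reachable seed is red if and only if its c-vector relative to the initial seed is sign-coherently non-positive. This lets us describe $\uRed(Q)$ as the maximum, over all seeds reachable from $\hat{Q}$, of the number of non-positive c-vector columns. The content of the conjecture is then that this maximum is the same whether we use $\hat{Q}$ or $\widehat{\mu_k(Q)}$ as the initial framed seed.

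The first instinct is to exploit the involutivity of $\mu_k$, so that mutation sequences starting at $\hat{Q}$ can be translated to sequences starting at $\mu_k(\hat{Q})$ by prepending $k$. The obstacle is that $\mu_k(\hat{Q}) \ne \widehat{\mu_k(Q)}$: the mutable parts agree with $\mu_k(Q)$, but $\mu_k(\hat{Q})$ carries extra arrows incident to the frozen vertex $k'$ that record the initial mutation. Since colors of mutable vertices are determined by frozen arrows, the sets of reachable quivers coincide but their red/green colorings need not. To bridge the two framings, I would invoke the change-of-initial-seed formula for c-vectors (due to Nakanishi-Zelevinsky), which expresses the c-matrix of a seed with respect to $\widehat{\mu_k(Q)}$ as a prescribed transformation of its c-matrix with respect to $\hat{Q}$.

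The main step, and the reason this remains a conjecture rather than a theorem, is to show that although individual reachable seeds can acquire different red counts under this change of base, the \emph{maximum} red count is preserved. I would attempt this through scattering diagrams: each reachable seed corresponds to a cluster chamber, and the scattering diagrams of $Q$ and $\mu_k(Q)$ are canonically identified up to a piecewise-linear change of basis. The goal is then to express the red count of a chamber as a quantity intrinsic to the scattering diagram (for instance, the number of walls bounding the chamber on a prescribed side of the initial chamber) that is manifestly invariant under this identification. The hard part is that the piecewise-linear change of basis induced by $\mu_k$ does not preserve the standard positive or negative octants, so sign-coherence alone is insufficient; a finer geometric input characterizing red counts as a property of the chamber rather than of its c-vectors in a fixed basis appears to be needed.
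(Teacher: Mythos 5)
The statement you are trying to prove is a \emph{conjecture} in the paper: the authors do not prove it in general, and your proposal does not either, as you yourself acknowledge when you write that the main step ``remains a conjecture rather than a theorem.'' So the honest assessment is that there is a genuine gap, and it is exactly the one you name: you have no way to show that the maximum red count over reachable chambers is preserved under the Nakanishi--Zelevinsky change of initial seed, because the piecewise-linear identification of the two scattering diagrams does not preserve the negative orthant, and a non-maximal red count is not pinned to any distinguished chamber. This diagnosis matches the paper's own discussion in its concluding section almost verbatim (``without a reddening sequence we do not end the all negative orthant\dots the corresponding path ends in some chamber spanned a collection of $g$-vectors we have less control over''), so you have correctly located where the difficulty lives, but you have not removed it.

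Where your outline diverges from the paper is in strategy for making partial progress. You attack mutation invariance head-on, chamber by chamber. The paper instead makes an elementary reduction: it observes that part (iii) of its Conjecture~\ref{conj:1green} --- that every connected $Q$ without a reddening sequence has $\uRed(Q) = |V(Q)| - 1$ --- already implies Conjecture~\ref{conj:muinv}, because then $\uRed(Q)$ is determined entirely by whether $Q$ admits a reddening sequence, and that existence is mutation invariant by Muller's scattering-diagram result. This sidesteps any need to track how individual chambers transform: one only needs a lower bound $\uRed(Q) \geq |V(Q)| - 1$, which the paper then establishes for all finite mutation type quivers by explicit general maximal green sequences for $X_7$ and by a topological cutting argument for once-punctured closed surfaces. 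If you want to make progress on the conjecture, proving the lower bound $\uRed(Q) \geq |V(Q)|-1$ for broader classes of connected quivers is likely a more tractable target than the intrinsic chamber-counting invariant you propose, though your framing would, if completed, yield strictly more information.
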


Since the existence of a maximal green sequence is not mutation invariant~\cite{MullerEJC}, the above conjecture is false if $\uRed(Q)$ is replaced by $\Red(Q)$.
We now make the a second conjecture (with multiple versions of varying strength) and establishing special cases of the this conjecture will be the focus of the rest of the paper.
Even the weakest version of the following conjecture implies Conjecture~\ref{conj:muinv}.

\begin{conjecture}
For any connected quiver $Q$ which does not admit a reddening sequence
\begin{itemize}
\item[(i)]we have $\Red(Q) = |V(Q)| - 1$.
\item[(ii)]we have $\Red(Q) = |V(Q)| - 1$ and every vertex of $Q$ is the last remaining green vertex after the application of some general maximal green sequence.
\item[(iii)]we have $\uRed(Q) = |V(Q)| - 1$.
\item[(iv)] we have $\uRed(Q) = |V(Q)| - 1$ and every vertex of $Q$ is the last remaining green vertex after the application of some general reddening sequence.
\end{itemize}
\label{conj:1green}
\end{conjecture}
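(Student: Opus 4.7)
Since (iv) implies both (ii) and (iii), each of which implies (i), and since the upper bound $\Red(Q) \leq \uRed(Q) \leq |V(Q)| - 1$ holds automatically when $Q$ admits no reddening sequence, the whole conjecture reduces to part (iv): for every vertex $v$ of a connected non-reddening $Q$, construct a general reddening sequence that leaves exactly $v$ green. The plan is to attack (iv) via a ``base case plus gluing'' strategy adapted to the Felikson--Shapiro--Tumarkin classification in the finite mutation type setting, and to flag the genuinely general case as needing scattering diagram input.

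For the finite mutation type portion, the connected non-reddening quivers essentially reduce to two base families: Markov-type blocks coming from triangulations of punctured surfaces, and the exceptional quiver $X_7$. I would dispatch these base cases first by exhibiting explicit general reddening sequences that leave each prescribed vertex as the last green one. Large symmetry groups cut the casework significantly: the Markov quiver is vertex-transitive under the cyclic $\mathbb{Z}/3$ action, so a single computation handles all three vertices, and $X_7$ likewise has enough symmetry to trim the check. These verifications establish (iv) for the base cases, with computations analogous in style to those recorded in Table~\ref{tab:triExt}.

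The inductive step then propagates via triangular extension. If $Q$ is a triangular extension of $Q_1$ by $Q_2$ with one or both factors non-reddening, the strategy is to interleave a (general) reddening sequence for one side with a (general) reddening sequence for the other, exploiting the crossing arrows to turn red one vertex that intrinsically resisted reddening inside its own block. Proposition~\ref{prop:triangular} and Table~\ref{tab:triExt} already illustrate this for the Markov-on-Markov extension, where the answer jumps from $2+2=4$ red vertices to $5$, and where every one of the six vertices is realized as the remaining green vertex. Formalizing this interleaving in full generality will be the main obstacle: crossing arrows evolve in a delicate way under mutation, so neither direct concatenation of sequences nor naive block induction suffices, and one needs explicit bookkeeping of how arrows reorient at each step.

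For the full conjecture beyond finite mutation type I would turn to scattering diagrams and universal quivers as suggested in the introduction. Non-reddening corresponds to an obstruction to reaching the negative chamber of the cluster scattering diagram, and part (iv) is the assertion that this obstruction is always of ``codimension one'' in the sense that exactly one vertex's worth of greenness survives. The genuinely hard step, and the one I do not know how to attempt without new tools, is ruling out higher codimension obstructions, equivalently ruling out configurations in which two or more vertices are simultaneously forced to remain green. This is precisely where the hypothesis that $Q$ is connected must enter, and where a universal quiver reduction may ultimately provide the needed leverage.
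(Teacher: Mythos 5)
The statement here is a conjecture that the paper leaves open in general; the paper only establishes it for finite mutation type quivers (Section~\ref{sec:finite}) and for quivers with at most four vertices, so there is no complete proof to compare against. That said, your proposal contains a concrete logical error in its opening reduction. You claim that (iv) implies both (ii) and (iii), each of which implies (i), so that the whole conjecture reduces to (iv). The implications run the other way. Since every green sequence is in particular a sequence of mutations, $\Red(Q) \leq \uRed(Q)$, so (i) implies (iii) and not conversely; and a general reddening sequence witnessing (iv) need not be a green sequence, so (iv) tells you nothing about $\Red(Q)$ and implies neither (i) nor (ii). The paper explicitly identifies (ii) as the strongest version and (iii) as the weakest. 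Proving (iv) alone would therefore not settle the conjecture, and your ``upper bound plus part (iv)'' framing collapses.

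On the finite mutation type portion, your proposed induction via triangular extensions does not match the paper's argument and would not reach the relevant quivers: the non-reddening finite mutation type quivers are $X_7$ and the adjacency quivers of once-punctured closed surfaces, and the latter are not triangular extensions of smaller non-reddening blocks, so there is no base-case-plus-gluing decomposition available. The paper instead cuts the surface along the arc $\alpha$ corresponding to the vertex $v_\alpha$ one wishes to keep green; the cut surface has a boundary component, so by Mills' classification its adjacency quiver (which is exactly $Q_T$ with $v_\alpha$ deleted) admits a maximal green sequence, and running that sequence on $Q_T$ turns every vertex except $v_\alpha$ red. Your honest flagging of the general case as requiring new scattering diagram input is consistent with the paper, which likewise leaves it open, but the portions of your plan that are meant to be concrete do not go through as stated.
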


If $Q$ is the disjoint union of two quivers which do no admit a reddening sequences then $\Red(Q) \leq \uRed(Q) \leq |V(Q)| - 2$.
So, the connectedness hypothesis is needed in Conjecture~\ref{conj:1green} otherwise one must treat connected components of the quiver individually.
Also, as previously stated it is the case that Conjecture~\ref{conj:1green} implies Conjecture~\ref{conj:muinv}.
To see this start by assuming part (iii) of Conjecture~\ref{conj:1green}, which is the weakest version of the conjecture, implying that $\uRed(Q) = |V(Q)|$ or $\uRed(Q) = |V(Q)|-1$ for any connected quiver $Q$.
Note for Conjecture~\ref{conj:muinv} we can restrict the connected quivers with out loss of generality.
So, the quiver $Q$ either admits a reddening sequence and has $\uRed(Q) = |V(Q)|$ or else does not admit a reddening sequence and has $\uRed(Q) = |V(Q)| - 1$.
Since the existence of a reddening sequence is mutation invariant this would imply that the unrestricted red size is also mutation invariant.

From Table~\ref{tab:triExt} we can verify that part (ii) of Conjecture~\ref{conj:1green}, which is the strongest version of the conjecture, holds for the quiver in Figure~\ref{fig:triExt}.
These general maximal green sequences were found with ad hoc methods, but we believe this example is instructive.
One thing to observe about the quiver $Q$ in Figure~\ref{fig:triExt} is that in $Q \setminus \{v\}$ contains the Markov quiver as on induced subquiver for any vertex $v$.
It then follows that $Q \setminus \{v\}$ does not have a maximal green sequence~\cite[Theorem 1.4.1]{MullerEJC}.
So, to find a general maximal green sequence leaving only the vertex $v$ green we must mutate at $v$ at least once.

\section{Finite mutation type and red size}\label{sec:finite}
It is known exactly which quivers of finite mutation type admit a maximal green sequence, which turns out to be equivalent to admitting a reddening sequence for this class of quivers, by the classification completed by Mills~\cite{Mills}.
A quiver $Q$ is of finite mutation type, then $\Red(Q) = |V(Q)|$ if and only if $Q$ is not mutation equivalent to $X_7$~\cite{X7} nor a quiver from a once punctured closed marked surface~\cite{Lad}.

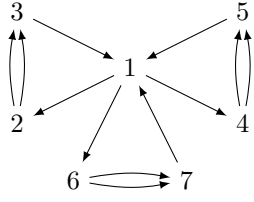
\begin{figure}
    \centering
\begin{tikzpicture}[scale=0.5]
\node (1) at (0,0) {$1$};
\node (2) at (-3,-1.5) {$2$};
\node (3) at (-3,1.5) {$3$};
\node (4) at (3,-1.5) {$4$};
\node (5) at (3,1.5) {$5$};
\node (6) at (-1.5,-3) {$6$};
\node (7) at (1.5,-3) {$7$};

\draw[-{latex}] (1) to (2);
\draw[-{latex}, bend left=10] (2) to (3);
\draw[-{latex}, bend right=10] (2) to (3);
\draw[-{latex}] (3) to (1);

\draw[-{latex}] (1) to (4);
\draw[-{latex}, bend left=10] (4) to (5);
\draw[-{latex}, bend right=10] (4) to (5);
\draw[-{latex}] (5) to (1);

\draw[-{latex}] (1) to (6);
\draw[-{latex}, bend left=10] (6) to (7);
\draw[-{latex}, bend right=10] (6) to (7);
\draw[-{latex}] (7) to (1);
\end{tikzpicture}
    \caption{The quiver $X7$}.
    \label{fig:X7}
\end{figure}

\begin{table}[]
    \centering
\begin{tabular}{|c|c|}\hline
General maximal green sequence & Remaining green vertex\\ \hline
$(2,4,6,3,5,7)$ & $1$ \\ \hline
$(2,4,5,3,7,1,3,5,2,1,4,3,5,1)$ & $6$ \\ \hline
$(1,3,2,3,5,2,3,6,4,2,6,4,3,1,2,4,5)$ & $7$\\ \hline
\end{tabular}
    \caption{General maximal green sequences for $X_7$.}
    \label{tab:X7}
\end{table}

\begin{table}[]
    \centering
\begin{tabular}{|c|c|}\hline
General maximal green sequence & Remaining green vertex\\ \hline
$(3,5,7,2,4,6,3,5,7)$ & $1$ \\ \hline
$(1,2,4,5,3,7,1,3,5,2,1,4,3,5)$ & $6$ \\ \hline
$(3,2,3,5,2,3,6,4,2,6,4,3,1,2,4,5,2)$ & $7$\\ \hline
\end{tabular}
    \caption{General maximal green sequences for $\mu_1(X_7)$.}
    \label{tab:muX7}
\end{table}

\begin{lemma}
If $Q$ is mutation equivalent to $X_7$, then
\[\Red(Q) = \uRed(Q) = 6 = |V(X_7)| - 1\]
and every vertex of $X_7$ is the last remaining green vertex after the application of some general maximal green sequence.
\label{lem:X7}
\end{lemma}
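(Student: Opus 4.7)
The plan is to combine an easy upper bound with an explicit lower bound, and then deduce the ``every vertex'' statement from the symmetry of $X_7$. For the upper bound, I would invoke the cited result that $X_7$ does not admit a reddening sequence, together with the mutation invariance of reddening-sequence existence; this yields $\uRed(Q) \le |V(Q)| - 1 = 6$, and hence $\Red(Q) \le 6$, for every $Q$ mutation equivalent to $X_7$.

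For the matching lower bound, the key structural observation is that the mutation class of $X_7$ consists of only two quivers up to isomorphism, namely $X_7$ and $\mu_1(X_7)$. This is a finite check: $X_7$ carries an obvious $S_3$ action cyclically permuting the three arms $\{2,3\}$, $\{4,5\}$, $\{6,7\}$ emanating from the central vertex $1$, and one verifies directly that mutating $X_7$ at any non-central vertex returns an isomorphic copy of $X_7$ (effectively swapping the two vertices on that arm), while mutating at $1$ produces the genuinely new quiver $\mu_1(X_7)$, from which all further mutations return to $X_7$ or to $\mu_1(X_7)$. Tables~\ref{tab:X7} and~\ref{tab:muX7} then supply, for $X_7$ and $\mu_1(X_7)$ respectively, concrete general maximal green sequences producing $6$ red vertices, giving $\Red(Q) \ge 6$ and hence $\Red(Q) = \uRed(Q) = 6$ throughout the mutation class.

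Finally, the claim that every vertex of $X_7$ arises as the unique remaining green vertex of some general maximal green sequence follows from the same $S_3$ symmetry: the vertex orbits are $\{1\}$, $\{2,4,6\}$, $\{3,5,7\}$, and Table~\ref{tab:X7} supplies a sequence leaving each of the representatives $1$, $6$, $7$ as the sole green vertex. Transporting these sequences through the arm-permuting symmetries covers every remaining vertex. The only substantive work in the proof is the mechanical but tedious verification that each sequence in Tables~\ref{tab:X7} and~\ref{tab:muX7} really is a general maximal green sequence yielding exactly six red vertices and, for the mutation-class claim, the enumeration showing the class has size two; both are finite checks.
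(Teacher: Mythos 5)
Your proposal is correct and takes essentially the same route as the paper's proof: the upper bound comes from the fact that neither quiver in the mutation class of $X_7$ admits a reddening sequence, the lower bound from the explicit sequences in Tables~\ref{tab:X7} and~\ref{tab:muX7} for the two quivers in the (size-two) mutation class, and the remaining vertices are covered by the arm-permuting automorphism group of $X_7$.
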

\begin{proof}
It is known that neither of the two quivers in the mutation class of $X_7$ admit a reddening sequence.
So, it suffices to provide a green sequence for each vertex $v$ of both quivers in the mutation class of $X_7$ that results in $6$ red vertices with $v$ being that last remaining red vertex.
Let us consider $X_7$ with vertices as labeled in Figure~\ref{fig:X7}.
General maximal green sequences each of which has only one remaining green vertex are given in Table~\ref{tab:X7}.
After considering the automorphism group of the quiver $X_7$ we can conclude the lemma holds for $X_7$.
Mutating the quiver $X_7$ at vertex $1$ gives the only other quiver in the mutation class of $X_7$.
For this other quiver general maximal green sequences are shown in Table~\ref{tab:muX7}
\end{proof}

\begin{theorem}
If $Q$ is a quiver of finite mutation type which does not admit a reddening sequence, then
\[\Red(Q) = \uRed(Q) = |V(Q)| - 1\]
and every vertex of $Q$ is the last remaining green vertex after the application of some general maximal green sequence.
\end{theorem}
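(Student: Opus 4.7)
The plan is to invoke the classification of finite mutation type quivers failing to admit a reddening sequence, recalled in the paragraph preceding Lemma~\ref{lem:X7}: such a $Q$ is mutation equivalent either to $X_7$ or to an adjacency quiver of a triangulation of a once-punctured closed marked surface. The $X_7$ case is exactly Lemma~\ref{lem:X7}, so the remaining task is the once-punctured closed surface case.

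Fix $Q = Q_T$ for $T$ a triangulation of a once-punctured closed surface of genus $g \geq 1$. Since $Q$ admits no reddening sequence we already have $\Red(Q) \leq \uRed(Q) \leq |V(Q)|-1$, so the entire theorem follows once, for each vertex $v$ of $Q$, one exhibits a green sequence of $Q$ whose application reddens every vertex except $v$. Any such sequence is automatically a general maximal green sequence, simultaneously witnessing both $\Red(Q) = \uRed(Q) = |V(Q)|-1$ and the ``every vertex is last'' statement.

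For $g=1$ the quiver is the Markov quiver of Figure~\ref{fig:Markov}, which carries a cyclic automorphism of order three permuting its vertices. So it suffices to produce one explicit green sequence leaving a single chosen vertex green; the other two cases follow by relabeling. This reduces to a short direct computation of a few mutation steps.

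For $g \geq 2$ my approach is to flip $T$ (via mutations, which preserve the mutation class and hence $\Red$ and $\uRed$ up to the desired value) into a triangulation that separates into an embedded once-punctured torus sub-triangulation and a complementary bordered marked surface. Bordered marked surfaces admit maximal green sequences by Mills~\cite{Mills}, so the complement can be fully reddened. Combined with the base case, this handles a vertex $v$ lying in the torus piece by first reddening the complement and then applying the Markov sequence locally. For $v$ in the complementary piece, one truncates the maximal green sequence on the complement to stop just before reddening $v$, and separately reddens the torus piece using a local sequence (two of its three vertices can always be reddened by the base case). The main obstacle here is the interaction between the two pieces: mutations on one side can modify arrows on the other through the connecting arrows, so the locally constructed sequences need not concatenate naively. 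Controlling this requires either a careful ordering of mutations combined with the triangular-extension lower bounds $\Red(Q) \geq \Red(Q_1) + \Red(Q_2)$ of~\cite[Theorem 4.8]{genMGS} and its $\uRed$ analog from~\cite[Remark 4.6]{Uniform}, or a direct verification exploiting the explicit description of once-punctured closed surface quivers in~\cite{Lad}. Either route finishes the theorem, as the upper bound $|V(Q)|-1$ is already in hand.
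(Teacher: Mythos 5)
Your reduction to exhibiting, for each vertex $v$, a green sequence reddening everything except $v$ is correct, and the upper bound $\Red(Q)\leq\uRed(Q)\leq|V(Q)|-1$ together with the $X_7$ and genus-one (Markov) cases are fine. But for genus $g\geq 2$ your argument has a genuine gap, which you partially acknowledge but do not close. First, the quiver of a once-punctured closed surface is \emph{not} a triangular extension of the torus piece's quiver by the complement's quiver: the arcs bounding the two pieces lie in triangles on both sides, so arrows cross the cut in both directions, and the bounds $\Red(Q)\geq\Red(Q_1)+\Red(Q_2)$ you cite simply do not apply. Second, the step ``truncate the maximal green sequence on the complement to stop just before reddening $v$'' is not well defined: during a maximal green sequence a vertex may turn red and then green again several times, so there is no guarantee that any prefix of the sequence leaves exactly $v$ green among the complement, nor that the torus piece can then still be reddened afterwards. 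Saying ``either route finishes the theorem'' is asserting the conclusion, not proving it.

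The paper avoids all of this with a cleaner topological move that you are missing: instead of splitting the surface into two pieces, pick the single arc $\alpha$ corresponding to the chosen vertex $v_\alpha$ and \emph{cut along that one arc}, replacing it by a boundary component with one marked point. The resulting bordered surface has a triangulation whose adjacency quiver is exactly the induced subquiver $Q_T\setminus\{v_\alpha\}$, and since it has nonempty boundary it admits a maximal green sequence by \cite{Mills}. Applying that sequence $\sigma$ to $\hat{Q}_T$ gives a green sequence reddening all vertices except $v_\alpha$, which yields $\Red(Q_T)\geq|V(Q_T)|-1$ with $v_\alpha$ arbitrary — no gluing of two locally constructed sequences and no truncation argument is needed. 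Adopting this one-arc cut in place of your two-piece decomposition would repair the proof.
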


\begin{proof}
The only quivers of finite mutation type which do not admit reddening sequences are $X_7$ and quivers which arise from ideal triangulation of once punctured closed surfaces.
The case of $X_7$ is handled by Lemma~\ref{lem:X7}.
So, it remains to take a quiver $Q$ such that $Q$ is a adjacency quiver of an ideal triangulation of once punctured closed surface.

Let $(S,M)$ be a marked surface and $T$ any triangulation of this surface. 
Now let $Q_T$ denote the quiver associated to this triangulation. Let $\alpha$ be an arc in $T$, with $v_{\alpha}$ the associated vertex in $Q_T$. Since the surface $S$ does not admit a reddening sequence we know that it must be a closed surface with exactly one marked point. Then consider the surface obtained by replacing $\alpha$ with a boundary component with one marked point. One of two things will occur, either this \emph{cut} along the arc $\alpha$ with split the surface into two surfaces each with exactly one boundary component and one marked point on that component; or it will produce a surface with exactly one boundary component and one marked point on the boundary. See the Figures~\ref{fig:otriang}~and~\ref{fig:newtriang} for an example of the former type of topological operation.

\begin{figure}
\begin{center}
\begin{tikzpicture}[line cap=round,line join=round,x=1.0cm,y=1.0cm]
\clip(-4.3,-0.96) rectangle (5.66,6.3);

\draw[dashed][color=blue] (-1,4).. controls (0,2) ..(-1,0);

\draw[dashed][color=blue] (4,4).. controls (3,2) ..(4,0);

\draw[color=black] (-.75,3.5).. controls (1.5,2.5) ..(3.75,3.5);

\draw[color=black] (-.75,.5).. controls (1.5,1.5) ..(3.75,.5);

\draw[color=red] (1.25,2.75).. controls (1,1.85) ..(1.25,1.25);

\draw[dashed][color=red] (1.25,2.75).. controls (1.5,1.85) ..(1.25,1.25);

\draw (1.05,1.85) -- (-.35,2.5);
\draw (1.05,1.85) -- (-.35,1.5);
\draw (1.05,1.85) -- (3.35,2.5);

\begin{scriptsize}
\fill  (1.05,1.85) node {\large{$\bullet$}}; 
\fill  (1.35,3.5) node {\large{$\alpha$}};
\end{scriptsize}
\end{tikzpicture}
\end{center}
\caption{Original Triangulation
\label{fig:otriang}}
\end{figure}
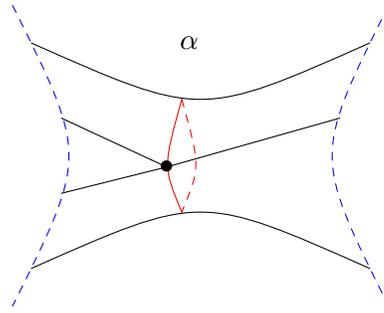

\begin{figure}
\begin{center}
\begin{tikzpicture}[line cap=round,line join=round,x=1.0cm,y=1.0cm]
\clip(-4.3,-0.96) rectangle (5.66,6.3);

\draw[dashed][color=blue] (-1,4).. controls (0,2) ..(-1,0);

\draw[dashed][color=blue] (4.5,4).. controls (3.75,2) ..(4.5,0);

\draw[color=black] (-.75,3.5)--(1.25,2.75);

\draw[color=black] (-.75,.5)--(1.25,1.25);

\draw[color=red] (1.25,2.75).. controls (1,1.85) ..(1.25,1.25);

\draw[color=red] (1.25,2.75).. controls (1.5,1.85) ..(1.25,1.25);

\draw[color=red] (2.25,2.75).. controls (2,1.85) ..(2.25,1.25);

\draw[color=red] (2.25,2.75).. controls (2.5,1.85) ..(2.25,1.25);

\draw (1.05,1.85) -- (-.35,2.5);
\draw (1.05,1.85) -- (-.35,1.5);

\draw (4.2,3.1) -- (2.25,2.75);
\draw (4.2,.75) -- (2.25,1.25);

\draw (2.45,1.85) -- (3.95,2.55);

\begin{scriptsize}
\fill  (1.05,1.85) node {\large{$\bullet$}}; 
\fill  (1.25,3.25) node {\large{$\alpha_1$}};
\fill  (2.45,1.85) node {\large{$\bullet$}}; 
\fill  (2.25,3.25) node {\large{$\alpha_2$}};
\end{scriptsize}
\end{tikzpicture}
\end{center}
\caption{Replacement Triangulation
\label{fig:newtriang}}
\end{figure}
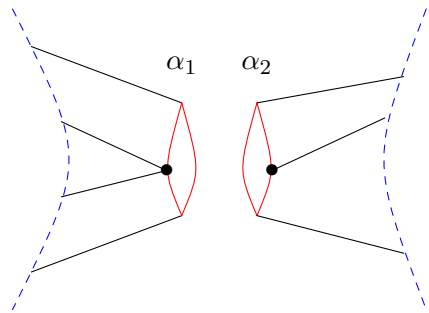

In either case, this produces a new triangulation of the surface $(S^{\alpha},M^{\alpha})$ which we will denote $T^{\alpha}$. Now we consider the associated quiver $Q_{T^{\alpha}}$. This is exactly the induced subquiver of $Q_T$ obtained by deleting the vertex $v_{\alpha}$.
Obtaining a quiver from a triangulation of different surface by deleting in this way is a known process, and can be done in more generality when considering surfaces that have boundary components and additional punctures (see e.g.~\cite[Lemma 9.10]{MullerAdv}). 

Since $Q_{T^{\alpha}}$ is a quiver associated to a triangulation of a surface with at least one boundary component (the one created in the above procedure) it admits a maximal green sequence by \cite{Mills}. We will denote the composition of mutations corresponding to this sequence bt $\sigma$.  

We will now apply $\sigma$ to $Q_T$.  This will be a sequence of green mutations, and the resulting quiver, $\sigma(Q_T)$, will have exactly one green vertex namely $v_{\alpha}$. Hence, $\uRed(Q_T) \geq \Red(Q_T) \geq n-1$ and we have $\uRed(Q_T) = \Red(Q_T) = n-1$.
Moreover, the vertex $v_\alpha$ that remains green can chosen arbitrarily and the theorem is proven.
\end{proof}

\begin{remark}
The general maximal green sequences constructed for $X_7$ and adjacency quivers of ideal triangulations of once punctured closed surfaces never mutate at the one vertex that remains green.
This means every single vertex deleted induced subquiver, and hence every proper induced subquiver, admits a maximal green sequence.
Therefore with respect to taking induced subquivers, finite mutation type quivers not admitting maximal green sequence are minimal examples of quivers not admitting maximal green sequences.
\end{remark}

We have the following immediate corollary since we have now verified Conjecture~\ref{conj:1green} for quivers of finite mutation type and have already discussed how the conjecture implies Conjecture~\ref{conj:muinv}.

\begin{corollary}
If $Q$ is a quiver of finite mutation type, then 
\[\Red(Q) = \uRed(Q) = \uRed(\mu_k(Q)) = \Red(\mu_k(Q))\]
for mutation in any direction $k$.
\end{corollary}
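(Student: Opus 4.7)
The plan is to derive the corollary directly from the preceding theorem combined with Mills's classification cited above it. First I would set up a sharp dichotomy: for any finite mutation type quiver $Q$, either $Q$ admits a reddening sequence, in which case $\Red(Q) = \uRed(Q) = |V(Q)|$ (since any reddening sequence turns every mutable vertex red, and hence in particular is a general reddening sequence, and a maximal green sequence exists by Mills's classification), or else $Q$ does not admit a reddening sequence, in which case the preceding theorem supplies $\Red(Q) = \uRed(Q) = |V(Q)| - 1$. Thus both red sizes are determined purely by two pieces of data: the cardinality $|V(Q)|$ and the binary question of whether $Q$ admits a reddening sequence.

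Next I would observe that each of these two inputs is mutation invariant, so both evaluate identically on $Q$ and on $\mu_k(Q)$. The cardinality is preserved since mutation acts only on arrows. Being of finite mutation type is preserved since $Q$ and $\mu_k(Q)$ share the same mutation class. The existence of a reddening sequence is likewise mutation invariant; this is the well-known fact already invoked in the discussion following Conjecture~\ref{conj:muinv}. Combining these, $\mu_k(Q)$ is of finite mutation type with $|V(\mu_k(Q))| = |V(Q)|$ and falls on the same side of the reddening dichotomy as $Q$. Hence all four quantities $\Red(Q)$, $\uRed(Q)$, $\Red(\mu_k(Q))$, $\uRed(\mu_k(Q))$ collapse to the common value $|V(Q)|$ or $|V(Q)| - 1$.

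There is essentially no obstacle: the substantive work is done in the theorem, and the corollary is a clean corollary-by-dichotomy. The only point requiring care is to cite, rather than reprove, the mutation invariance of the existence of a reddening sequence; once that is in hand the proof collapses to a single sentence.
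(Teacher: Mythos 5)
Your proof is correct and follows essentially the same route as the paper, which treats the corollary as immediate from the theorem together with the dichotomy (reddening sequence exists or not) and the mutation invariance of the existence of a reddening sequence. You simply spell out more explicitly what the paper compresses into one sentence, including the correct appeal to Mills's classification to get $\Red(Q) = |V(Q)|$ in the reddening case.
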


\section{Conclusion}\label{sec:conclusion}
We have proven Conjecture~\ref{conj:muinv} and the strongest version of Conjecture~\ref{conj:1green} for quivers of finite mutation type.
To conclude we discuss a few more cases our conjectures hold along with a few approaches to our conjectures in general.
Also, we indicate where some potential obstacles are encountered.

\subsection{Quivers with few vertices}
Now we verify that $\Red(Q) \geq |V(Q)| - 1$ for quivers with $1 \leq |V(Q)| \leq 4$.
In each case we are able to find a sufficiently large acyclic induced subquiver.

\begin{proposition}
If $|V(Q)| \leq 4$, then $\Red(Q) \geq |V(Q)|-1$.
\end{proposition}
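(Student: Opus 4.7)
The plan is to reduce the statement to finding, for each such $Q$, an induced subquiver $Q|_A$ that is acyclic and has $|A| \geq |V(Q)| - 1$. Acyclic quivers admit maximal green sequences~\cite[Lemma 2.20]{BDP}, and the same mechanism used in the proof of the finite mutation type theorem above shows that any maximal green sequence $\sigma$ for $Q|_A$ can be applied verbatim to $\hat{Q}$: since every mutation is at a vertex of $A$, the induced subquiver on $A \sqcup A'$ evolves identically to $\widehat{Q|_A}$, which together with sign coherence guarantees that each mutation remains green in $\hat{Q}$ and that every vertex of $A$ is red at the end. Thus $\Red(Q) \geq |A| \geq |V(Q)| - 1$.

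The cases $|V(Q)| \leq 2$ require nothing: the prohibition on $2$-cycles precludes any cycle in a quiver on at most two vertices, so $Q$ itself is acyclic and I can take $A = V(Q)$. For $|V(Q)| = 3$, either $Q$ is already acyclic, or $Q$ contains a $3$-cycle; in the latter situation removing any single vertex leaves a two-vertex (hence acyclic) induced subquiver, so $|A| = 2 = |V(Q)| - 1$ suffices.

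The bulk of the work is the case $|V(Q)| = 4$, where I would show that at least one of the four three-element subsets of $V(Q)$ induces an acyclic subquiver. If the underlying undirected graph of $Q$ is not $K_4$, then some pair of vertices has no arrow between them and any triple containing that pair cannot contain a $3$-cycle. Otherwise the underlying graph is $K_4$ and $Q$ determines a tournament on four vertices (ignoring arrow multiplicities). Here I would invoke the standard count that the number of transitive (acyclic) triangles in a tournament equals $\sum_{v} \binom{d_v^+}{2}$, where $d_v^+$ denotes out-degree. Since $\sum_v d_v^+ = \binom{4}{2} = 6$ and $\binom{\cdot}{2}$ is convex, this sum is minimised at the balanced score sequence $(2,2,1,1)$ with value $2$. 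Hence at least two of the four triangles are transitive, so at least one three-element subset induces an acyclic subquiver, contradicting the possibility that every such subset is cyclic.

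The main obstacle is the combinatorial case $|V(Q)| = 4$; the rest reduces to immediate observations. The subquiver-to-full-quiver principle used in the first paragraph is the same one implicit in the proof of the main theorem, so I would not re-derive it. A possible alternative to the tournament count would be to enumerate all four-vertex quivers up to isomorphism, but the tournament argument is cleaner and better points toward what might fail in higher vertex counts.
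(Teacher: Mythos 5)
Your proposal is correct and takes essentially the same approach as the paper: reduce to an acyclic induced subquiver on at least $|V(Q)|-1$ vertices, and apply its maximal green sequence to the full framed quiver. The only substantive difference is that you justify the key claim in the four-vertex case (that some triple of vertices induces an acyclic subquiver) via the tournament count $\sum_v \binom{d_v^+}{2} \geq 2$, whereas the paper simply asserts that such a triple exists.
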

\begin{proof}
If $|Q|=1$ or $|Q|=2$, then $Q$ is acyclic and has a maximal green sequence.
If $|Q|=3$, then choosing a maximal green sequence any induced subquiver on $2$ vertices will demonstrate that $\Red(Q) \geq 2$.
If $|Q| = 4$, then $Q$ must contain an induced acyclic subquiver on $3$ vertices.
A maximal green sequence for this induced acyclic subquiver on $3$ vertices will show $\Red(Q) \geq 3$.
\end{proof}

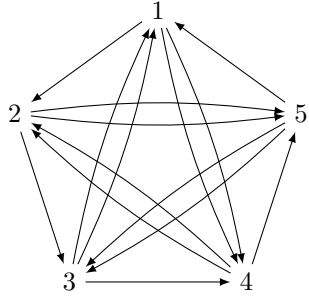
\begin{figure}
    \centering
    \begin{tikzpicture}
    \node (1) at ({2*sin(0*360/5)},{2*cos(0*360/5)}) {$1$};
    \node (2) at ({2*sin(-1*360/5)},{2*cos(-1*360/5)}) {$2$};
    \node (3) at ({2*sin(-2*360/5)},{2*cos(-2*360/5)}) {$3$};
    \node (4) at ({2*sin(-3*360/5)},{2*cos(-3*360/5)}) {$4$};
    \node (5) at ({2*sin(-4*360/5)},{2*cos(-4*360/5)}) {$5$};
    
    \draw[-{latex}] (1) to (2);
    \draw[-{latex}] (2) to (3);
    \draw[-{latex}] (3) to (4);
    \draw[-{latex}] (4) to (5);
    \draw[-{latex}] (5) to (1);
    \draw[-{latex},bend left=7] (1) to (4);
    \draw[-{latex},bend right=7] (1) to (4);
    \draw[-{latex},bend left=7] (4) to (2);
    \draw[-{latex},bend right=7] (4) to (2);
    \draw[-{latex},bend left=7] (3) to (1);
    \draw[-{latex},bend right=7] (3) to (1);
    \draw[-{latex},bend left=7] (2) to (5);
    \draw[-{latex},bend right=7] (2) to (5);
    \draw[-{latex},bend left=7] (5) to (3);
    \draw[-{latex},bend right=7] (5) to (3);

    \end{tikzpicture}

    \caption{The McKay quiver which has red size equal to 4.}
    \label{fig:McKay}
\end{figure}

Let us give one more small example involving a quiver which does not have a maximal green sequence, but for which we can verify we can mutate all but one vertex to red.

\begin{example}
In Figure~\ref{fig:McKay} we have the McKay quiver which is a quiver Br\"{u}stle,  Dupont, and P\'{e}rotin have shown does not admit a maximal green sequence~\cite[Example 8.2]{BDP}.
A general maximal green sequence for this quiver is
\[(1,4,2,5,3)\]
which results in only the vertex $2$ remaining green.
By the symmetry of the quiver we can conclude any vertex is the last remaining green vertex of some general maximal green sequence.
\end{example}

\subsection{Scattering diagrams and universal quivers}
\begin{figure}
    \centering
    \begin{tikzpicture}
    \node (u) at (-1,0) {$u$};
    \node (v) at (1,0) {$v$};
    \node (2) at (-1,-2) {$2$};
    \node (3) at (1,-2) {$3$};
    \node (1) at (-1,-4) {$1$};
    \node (4) at (1,-4) {$4$};
    \draw[-{latex}] (2) to (u);
    \draw[-{latex}] (3) to (v);
    \draw[-{latex}] (v) to (2);
    \draw[-{latex}] (u) to (3);
    
    \draw[-{latex}] (3) to (2);
    \draw[-{latex}, bend left = 17] (3) to (2);
    \draw[-{latex}, bend right = 17] (3) to (2);
    
    \draw[-{latex}, bend left = 7] (2) to (4);
    \draw[-{latex}, bend right = 7] (2) to (4);    
    \draw[-{latex}, bend left = 7] (1) to (3);
    \draw[-{latex}, bend right = 7] (1) to (3);
    
    \draw[-{latex}] (2) to (1);
    \draw[-{latex}] (4) to (1);
    \draw[-{latex}] (4) to (3);
    
    \begin{scope}[shift={(0.5,0.8660)},rotate=120]
        \node (u) at (-1,0) {};
    \node (v) at (1,0) {$w$};
    \node (2) at (-1,-2) {$6$};
    \node (3) at (1,-2) {$7$};
    \node (1) at (-1,-4) {$5$};
    \node (4) at (1,-4) {$8$};
    \draw[-{latex}] (2) to (u);
    \draw[-{latex}] (3) to (v);
    \draw[-{latex}] (v) to (2);
    \draw[-{latex}] (u) to (3);
    
    \draw[-{latex}] (3) to (2);
    \draw[-{latex}, bend left = 17] (3) to (2);
    \draw[-{latex}, bend right = 17] (3) to (2);
    
    \draw[-{latex}, bend left = 7] (2) to (4);
    \draw[-{latex}, bend right = 7] (2) to (4);    
    \draw[-{latex}, bend left = 7] (1) to (3);
    \draw[-{latex}, bend right = 7] (1) to (3);
    
    \draw[-{latex}] (2) to (1);
    \draw[-{latex}] (4) to (1);
    \draw[-{latex}] (4) to (3);
    \end{scope}
    
      \begin{scope}[shift={(-0.5,0.8660)},rotate=240]
        \node (u) at (-1,0) {};
    \node (v) at (1,0) {};
    \node (2) at (-1,-2) {$10$};
    \node (3) at (1,-2) {$11$};
    \node (1) at (-1,-4) {$9$};
    \node (4) at (1,-4) {$12$};
    \draw[-{latex}] (2) to (u);
    \draw[-{latex}] (3) to (v);
    \draw[-{latex}] (v) to (2);
    \draw[-{latex}] (u) to (3);
    
    \draw[-{latex}] (3) to (2);
    \draw[-{latex}, bend left = 17] (3) to (2);
    \draw[-{latex}, bend right = 17] (3) to (2);
    
    \draw[-{latex}, bend left = 7] (2) to (4);
    \draw[-{latex}, bend right = 7] (2) to (4);    
    \draw[-{latex}, bend left = 7] (1) to (3);
    \draw[-{latex}, bend right = 7] (1) to (3);
    
    \draw[-{latex}] (2) to (1);
    \draw[-{latex}] (4) to (1);
    \draw[-{latex}] (4) to (3);
    \end{scope}
    \end{tikzpicture}
    \caption{A $3$-universal quiver of Fomin, Igusa, and Lee~\cite{universal}.}
    \label{fig:universal}
\end{figure}
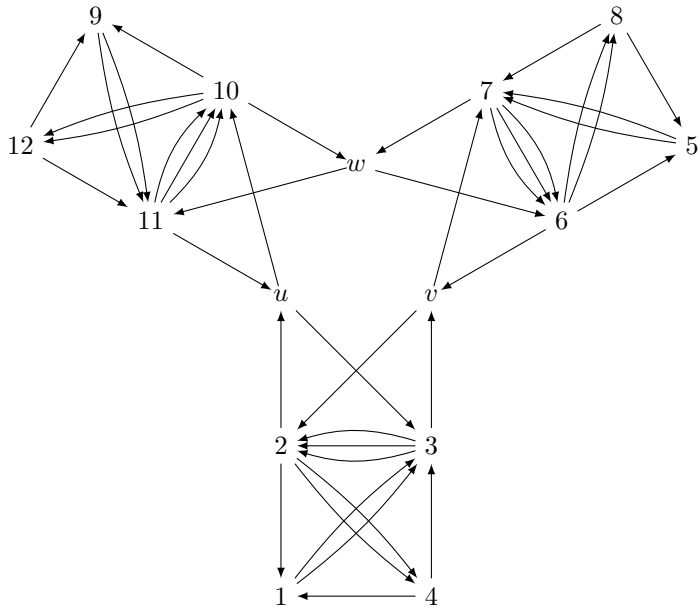

The (cluster) scattering diagrams of Gross, Hacking, Keel, and Kontsevich are a powerful tool that have been used to prove many important results in cluster algebra theory~\cite{GHKK}.
Using scattering diagrams Muller~\cite{MullerEJC} was able to show if a quiver admits a maximal green sequence (reddening sequence), then any induced subquiver also admits a maximal green (reddening sequence).
Additionally, it was established by Muller using scattering diagrams that the existence of a reddening sequences in a mutation invariant.
The fact that the chamber of the scattering diagram corresponding the when all vertices are red is distinguished as the all negative orthant was used in showing these results.

It would be desirable to show the mutation invariance in Conjecture~\ref{conj:muinv} as well as a statement of the form: If $\Red(Q) \geq |V(Q)| - \alpha$ for some constant $\alpha$, then $\Red(Q') \geq |V(Q')| - \alpha$ whenever $Q'$ is an induced subquiver of $Q$ (or the analogous statement with $\uRed$ in place of $\Red$).
We have not been able to obtain such a result due in part to the fact that without a reddening sequence we do not end the all negative orthant.
Rather the corresponding path ends in some chamber spanned a collection of $g$-vectors we have less control over.
Moreover, what prevents a maximal green or reddening sequence from existing is complicated regions of the scattering diagram which contain infinitely many walls.
Such regions are still not fully understood even for quivers with few vertices (see e.g.~\cite[Section 6.7]{Nak}).

Provided we had such a statement with $\alpha=1$, we could approach proving of our conjecture by attempting to verify it on a \emph{universal collection} of quivers as defined by Fomin, Igusa, and Lee~\cite{universal}. A universal collection of quivers has the property that every quiver is an induced subquiver of a quiver mutation equivalent to some quiver in the collection.
In any case universal collections give a source of quivers which necessarily do not admit reddening sequences.
An example of a universal quiver in given in Figure~\ref{fig:universal}.

We can produce general maximal green sequences for this quiver leaving only one vertex green, but we do not have a systematic approach.
The resulting quiver after a general maximal green sequence can be complicated unlike the case for a maximal green sequence which is related to the initial quiver in a simple manner~\cite[Proposition 2.10]{BDP}.
In fact it empirically appears that by iteratively performing mutation at random green vertices will result in a single remaining green vertex.

\bibliographystyle{alphaurl}
\bibliography{refs}

\end{document}